\newcounter{mnotecount}[section]
\newcommand{\rmnote}[1]{}
\DeclareFontFamily{OT1}{rsfs}{}
\DeclareFontShape{OT1}{rsfs}{m}{n}{ <-7> rsfs5 <7-10> rsfs7 <10-> rsfs10}{}
\DeclareMathAlphabet{\mathscr}{OT1}{rsfs}{m}{n}
\newcommand{\bel}[1]{\begin{equation}\label{#1}}
\newcommand{\beal}[1]{\begin{eqnarray}\label{#1}}
\newcommand{\beadl}[1]{\begin{deqarr}\label{#1}}
\newcommand{\eeadl}[1]{\arrlabel{#1}\end{deqarr}}
\newcommand{\eeal}[1]{\label{#1}\end{eqnarray}}
\newcommand{\eead}[1]{\end{deqarr}}
\newcommand{\eea}{\end{eqnarray}}
\newcommand{\eeaa}{\end{eqnarray*}}
\newcommand{\be}{\begin{equation}}
\newcommand{\ee}{\end{equation}}
\DeclareFontFamily{OT1}{rsfs}{}
\DeclareFontShape{OT1}{rsfs}{m}{n}{ <-7> rsfs5 <7-10> rsfs7 <10->
rsfs10}{} \DeclareMathAlphabet{\mycal}{OT1}{rsfs}{m}{n}
\newcommand{\N}{{\mathbb N}}
\newcommand{\Ric}{\operatorname{Ric}}
\def\mysavedown#1{\edef\mysubs{\mysubs#1}}
\def\mysaveup#1{\edef\mysups{\mysups#1}}
\def\mydown#1{{\mytensor}_{\vphantom{\mysubs}#1}}
\def\myup#1{{\mytensor}^{\vphantom{\mysups}#1}}
\def\tensor#1#2{
  #1
  \def\mytensor{\vphantom{#1}}
  \def\mysubs{\relax}
  \def\mysups{\relax}
  \let\down=\mysavedown
  \let\up=\mysaveup
  #2
  \let\down=\mydown
  \let\up=\myup
  #2
  }
\newcommand{\Riem}{\operatorname{Riem}}
\newcommand{\Tr}{\operatorname{Tr}}
\newcommand{\R}{\mathbb R}
\renewcommand{\S}{\mathbb S}
\newcommand{\griem}{{ {\mathfrak g}}}
\renewcommand{\phi}{\varphi}
\renewcommand{\epsilon}{\varepsilon}
\renewcommand{\Im}{\mbox{Im}\;}
\def\crn#1#2{{\vcenter{\vbox{
        \hbox{\kern#2pt \vrule width.#2pt height#1pt
           }
          \hrule height.#2pt}}}}
\newcommand{\Ein}{\operatorname{Ein}}
\renewcommand{\hbar}{{\overline h}}
\newcommand{\pre}[2]{{{\vphantom{#2}}^{#1}}\kern-.2ex{#2}}
\newcommand{\gpsea}{\mathfrak G}
\newcommand{\gpseb}{\mathcal G}
\theoremstyle{plain}
\newtheorem{theorem}{Theorem}[section]
\newtheorem{lemma}[theorem]{Lemma}
\newtheorem{proposition}[theorem]{Proposition}
\theoremstyle{definition}
\newtheorem{exemple}[theorem]{Exemple}
\numberwithin{equation}{section}
\begin{document}
\title[$L^p$ almost conformal isometries and Ricci operator]
{$L^p$ almost conformal isometries of Sub-Semi-Riemannian metrics and Solvability of  a Ricci equation}

\author[E.  Delay]{Erwann
Delay} \address{Erwann Delay, Avignon Universit\'e, Laboratoire de Math\'ematiques d'Avignon (EA 2151)
F-84916 Avignon}
\email{Erwann.Delay@univ-avignon.fr}
\urladdr{http://www.math.univ-avignon.fr}

\date{January 18, 2107}

\begin{abstract}
Let $M$ be a smooth compact  manifold without boundary. 
We consider  two smooth Sub-Semi-Riemannian metrics  on $M$.
Under suitable conditions, we show that they are almost conformally isometric
in an $L^p$ sense. 
Assume also that $M$ carries a Riemannian metric with  parallel Ricci curvature. 
Then 
 an equation of Ricci type, is in some sense solvable, without assuming any
closeness near a special metric.
\end{abstract}


\maketitle

\noindent {\bf Keywords} : Sub-Semi-Riemannian metrics,  Ricci curvature, Einstein metrics, Inverse problem, Quasilinear elliptic systems.
\\
\newline
{\bf 2010 MSC} : 53C21, 53A45,  58J05, 35J62, 53C17, 53C50.
\\
\newline

\tableofcontents

\section{Introduction}\label{section:intro}
The goal of this note is to prove that the two principal results of D.~DeTurck \cite{DeturckEinstein} given for positive definite symmetric bilinear form and for 
special Einstein metrics  can be extended significantly in
different ways.
%

Firstly, we can extend  the positive definideness  condition of the Riemannian metrics to  Sub-Semi-Riemmannian metrics with the same rank and signature.


Secondly we are able to replace some particular Einstein metrics of non zero  scalar curvature by any parallel Ricci metrics (ie. metrics with covariantly constant Ricci tensor).\\

Let $M$ be a smooth compact  manifold without boundary. 
A Sub-Semi-Riemannian metric $\gpsea$ (SSR-metric for short)  is a symmetric covariant 2-tensor field with constant
signature and constant rank.


Let us now state the first result, interesting by itself, about almost conformal  isometries.

\begin{lemma}\label{maintheoremPseudoRiem} 
Assume that  $\gpsea$ and $\gpseb$ are two smooth SSR-metrics on $M$  
with the same rank and signature. 
 Let  $g$ be a smooth Riemmannian metrics on $M$, let $p\in[1,\infty)$ and let $\epsilon>0$.
Then   there exist a smooth diffemorphism $\Phi$  and a smooth positive function $f$ such that $\Phi^*(f\gpseb)-\gpsea$
is $\epsilon$-close to zero in the $L^p$ norm relative to $g$.
\end{lemma}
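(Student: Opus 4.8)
The plan is to reduce the $L^p$ conclusion to a pointwise statement away from a set of small $g$-volume, to establish that by a pointwise linear‑algebra normalisation on small cubes, and finally to glue the resulting local affine matchings into one smooth diffeomorphism of $M$ while keeping $|d\Phi|_g$ bounded independently of the auxiliary parameters. \emph{Step 1 (reduction).} Since $g$, $\gpsea$, $\gpseb$ are continuous and $M$ is compact, it suffices to show: for every $\eta>0$ and $\delta>0$ there are a diffeomorphism $\Phi$ and a smooth $f>0$ with $|\Phi^*(f\gpseb)-\gpsea|_g<\eta$ pointwise on $M\setminus E$, where $\Vol_g(E)<\delta$, and with $|d\Phi|_g\le K$ globally for some $K$ independent of $\eta,\delta$. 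Indeed, splitting $\int_M|\Phi^*(f\gpseb)-\gpsea|_g^p\,dV_g$ over $M\setminus E$ and $E$ and bounding $|\Phi^*(f\gpseb)|_g$ on $E$ by $(\sup f)(\sup_M|\gpseb|_g)K^2$ gives a total at most $\Vol_g(M)\,\eta^p+C(K)\,\delta$, which is $<\epsilon^p$ for $\eta,\delta$ small; the uniform bound on $|d\Phi|_g$ is precisely what keeps the contribution of $E$ harmless.

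\emph{Step 2 (local model).} Fix a finite atlas and tile $M$, up to a seam set of $g$-volume $<\delta/2$, by coordinate cubes $Q_i$ of side $\rho$; write $r$ for the common rank and $(s_+,s_-)$ for the common signature of $\gpsea,\gpseb$. On $Q_i$ the tensor $\gpsea$ is within the common modulus of continuity $\omega(\rho)$ of the constant form $a_i:=\gpsea(x_i)$, of rank $r$ and signature $(s_+,s_-)$; choosing points $y_i\in M$, near $y_i$ the tensor $\gpseb$ is within $\omega(\rho)$ of $b_i:=\gpseb(y_i)$, again of rank $r$ and signature $(s_+,s_-)$. Two symmetric bilinear forms of equal rank and signature are congruent under $\mathrm{GL}(n,\R)$, and multiplying a form by a positive scalar changes neither invariant, so there are $c_i>0$ and $L_i\in\mathrm{GL}(n,\R)$ with $L_i^{\top}b_iL_i=c_i^{-1}a_i$. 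Since $M$ is compact and the nonzero eigenvalues of $\gpsea$ and $\gpseb$ relative to $g$ are bounded away from $0$ (constant rank), one may moreover take $\det L_i>0$, $|\det L_i|=1$, and $\|L_i\|,\|L_i^{-1}\|\le C$ with $C$ and the $c_i$ bounded above and below uniformly in $i,\rho$ (for $r=n$ the normalisation $|\det L_i|=1$ pins down $c_i=|\det a_i/\det b_i|^{1/n}$; for $r<n$ the null directions leave the scalar free, so one may keep $c_i\equiv1$). Defining $\Phi$, on a slightly shrunk $Q_i$ and in the charts at $x_i$ and $y_i$, to be the affine map $x\mapsto L_ix+t_i$ onto a parallelepiped $Q_i'$ of coordinate volume $\rho^n$ near $y_i$, and $f\equiv c_i$ on $Q_i'$, one computes that $\Phi^*(f\gpseb)=\gpsea$ up to a pointwise error $\le C_1\,\omega(C_1\rho)$ on the shrunk $Q_i$, with $C_1$ depending only on $C$ and $n$.

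\emph{Step 3 (globalisation).} Since $|\det L_i|=1$, the parallelepipeds $Q_i'$ have the same total coordinate volume as the cubes $Q_i$, so the translations $t_i$ may be chosen so that the $Q_i'$ are pairwise disjoint and their complement again has small $g$-volume; as $\|L_i^{\pm1}\|\le C$ these are small bodies of bounded eccentricity, so such a near-tiling exists for $\rho$ small. One then extends $\Phi$ across the seam region to a smooth diffeomorphism of $M$: arranging the source and target tilings to share the same adjacency pattern, the seam regions correspond cell by cell, and $\Phi$ is built there by controlled interpolation of the bounded affine data, which forces $|d\Phi|_g\le K$ everywhere, $K$ depending only on $C$ and the fixed atlas and \emph{not} on $\rho$ or $\delta$. (That such an extension to $\mathrm{Diff}(M)$ exists at all is also clear from the isotopy-extension theorem, the embedding of the disjoint shrunk cubes being orientation-preserving hence isotopic to the inclusion; the explicit construction is needed only for the uniform bound.) Interpolating the bounded positive constants $c_i$ gives a smooth positive $f$. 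Taking $\eta:=C_1\omega(C_1\rho)$, Step 1 applies, and choosing $\rho$ small enough (so that $\eta$ and the seam volume are small) yields $\|\Phi^*(f\gpseb)-\gpsea\|_{L^p}<\epsilon$ in the norm relative to $g$.

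The linear-algebra normalisation of Step 2 and the measure estimate of Step 1 are routine; the essential difficulty is Step 3, namely assembling the many local affine matchings into a single honest diffeomorphism of $M$ \emph{with a bound on $|d\Phi|_g$ uniform in all parameters}, since without such a bound the small but geometrically wild seam set could contribute an uncontrolled amount to the $L^p$ norm. It is precisely to secure this bound that one imposes $|\det L_i|=1$ and matches the combinatorics of the two tilings before (or instead of) appealing to isotopy extension.
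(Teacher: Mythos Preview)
Your Steps~1 and~2 are fine, but Step~3 contains a genuine gap, and the paper's argument (following DeTurck) avoids precisely this difficulty by using the conformal factor $f$ in a way you do not.

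\medskip

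\textbf{The gap.} You need the seam set $E$ to have small volume \emph{and} $|d\Phi|_g\le K$ on $E$ with $K$ independent of $\rho$. These two requirements are in tension. The seam between adjacent cubes $Q_i,Q_j$ has width $w$; interpolating between the two affine maps $x\mapsto L_ix+t_i$ and $x\mapsto L_jx+t_j$ over a strip of width $w$ produces a derivative of size roughly $\|L_i\|+\tfrac{1}{w}\bigl(\|L_i-L_j\|\,\rho+|t_i-t_j|\bigr)$. If $w$ is a fixed fraction of $\rho$ this is bounded, but then the seam occupies a fixed fraction of the volume of $M$, not a small one. If instead $w\ll\rho$ to make the seam small, then $|d\Phi|$ blows up unless $\|L_i-L_j\|$ is itself $O(w/\rho)$. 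Nothing in your construction forces adjacent $L_i$ to be close: the congruence $L_i^{\top}b_iL_i=c_i^{-1}a_i$ determines $L_i$ only up to the (noncompact, in the SSR case) stabiliser of $a_i$, and you have not specified a coherent choice. The phrases ``controlled interpolation of the bounded affine data'' and ``matching the combinatorics of the two tilings'' do not resolve this; nor does isotopy extension, as you yourself note. The unexplained freedom to place the targets near independently chosen points $y_i$ only makes the extension problem harder.

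\medskip

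\textbf{How the paper avoids this.} The paper does \emph{not} seek a uniform bound on $|d\Phi|$. Instead it takes, on each simplex $S$ with centre $x$, the linear map $\Phi_x=\sqrt{\alpha_x}\,u_x$ together with $f=1/\alpha_x$; then $\Phi_x^*(f\gpseb)$ is independent of $\alpha_x$, while $\Phi_x(S)$ shrinks into an arbitrarily small neighbourhood of $x$ as $\alpha_x\to 0$. Thus $\Phi$ maps each simplex into itself and is patched to the identity across a fixed thin neighbourhood $\Omega$ of the $(n{-}1)$--skeleton. The point you are missing is that on the ring where $\Phi$ is transitioning (and $|d\Phi|$ is not controlled), one simply takes $f$ very small there, so that $|\Phi^*(f\gpseb)|$ is bounded irrespective of $|d\Phi|$; the error $e=|\Phi^*(f\gpseb)-\gpsea|$ is then at most of order $\max|\gpseb|+\max|\gpsea|$ on all of $\Omega$, and a single smallness condition on $\mathrm{Vol}(\Omega)$ suffices. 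In short, the conformal factor absorbs both the scaling (to localise the image) and the lack of control on $d\Phi$ in the transition zone; your normalisation $|\det L_i|=1$ throws away exactly this freedom and leaves you with the hard extension problem.
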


Before going to the application for a Ricci equation, let us introduce some notations.
For $(M,\griem)$ a smooth riemannian  manifold, we denote by  $\Ric(\griem)$ its Ricci  curvature.
For a real constant $\Lambda$, we consider the operator
$$
\Ein(\griem):=\Ric(\griem)+\Lambda \griem.
$$
This operator is geometric in the sense that for any smooth diffeomorphism $\varphi$, 
$$
\varphi^*\Ein(\griem)=\Ein(\varphi^*\griem).
$$
We would like to invert $\Ein$.
We thus choose $\mathcal E$ a symmetric 2-tensor field on $M$,  and look for $\griem$ Riemannian metric such that
\bel{mainequation}
\Ein(\griem)=\mathcal E.
\ee
This is a geometrically natural and difficult quasilinear system to solve, already for  perturbation methods.
The prescribed Ricci curvature problem has a long history  starting with the work of D. DeTurck \cite{{Deturck:ricci}},\cite{DeturckEinstein}, \cite{Deturckrank1},
\cite{Deturck-Koiso}, \cite{Deturck-Kazdan}, \cite{Baldes1986},
 \cite{Hamilton1984}, \cite{Delanoe1991}, \cite{Delanoe2003}, \cite{Delay:etude},\cite{DelayHerzlich},   \cite{Delay:study}, \cite{Delay:ricciproduit}, \cite{Delay:ricciAE},...

Motivated by the explosion of studies around the Ricci flow, and recently, some discrete versions thereof  (eg. $\Ein(g_{i+1})=g_i$), a  renewed interest arises for this kind of natural geometric equations. We invite the reader to look at the nice recent works of A. Pulemotov and Y. Rubinstein  \cite{Pulemotov2013} and \cite{PulemotovRubinstein}
for related results. Our contribution here is the following.

\begin{theorem}\label{maintheoremRicci} 
Assume that $M$ carries a Riemannian metric $g$ with parallel Ricci tensor.
Let $\Lambda\in \R$ such that  $\Ein(g)$ is non degenerate, and that  $-2\Lambda$ is not  in the spectrum of the Lichnerowicz Laplacian of $g$.
\footnote{Like D. Deturck \cite{DeturckEinstein}, we may allow an eigenspace spanned by $g$ when $\Lambda=0$}
Then for any  ${\mathcal E}\in C^{\infty}(M,\mathcal S_2)$  
with the same rank and signature as  $\Ein(g)$ 
at each point of $M$,
 there exist a smooth positive function $f$ and  a Riemannian metrics $\griem$ in 
$C^{\infty}(M,\mathcal S_2)$ such that
$$
\Ein(\griem)=f\,{\mathcal E}.
$$
\end{theorem}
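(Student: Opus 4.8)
The plan is to combine the almost-conformal isometry lemma (Lemma~\ref{maintheoremPseudoRiem}) with an inverse-function-theorem argument near the parallel Ricci metric $g$, exploiting the geometric invariance of $\Ein$ to upgrade a local solvability statement into the global one claimed. The first step is the linearization: since $g$ has parallel Ricci tensor, the linearization of $\griem\mapsto\Ein(\griem)$ at $g$, after the usual DeTurck-type gauge fixing (adding a Lie-derivative term to kill the diffeomorphism degeneracy, as in \cite{DeturckEinstein}), is, up to lower-order and curvature terms, $\tfrac12\DeltaL + \Lambda$ acting on symmetric $2$-tensors, where $\DeltaL$ is the Lichnerowicz Laplacian. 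The hypotheses that $\Ein(g)$ is non-degenerate and that $-2\Lambda$ is not in the spectrum of $\DeltaL$ (allowing the span of $g$ when $\Lambda=0$, per the footnote) are exactly what make this linearized operator an isomorphism on the appropriate \holder or Sobolev spaces. Hence by the implicit function theorem there is a neighborhood $\mathcal U$ of $\Ein(g)$ in $C^\infty(M,\mathcal S_2)$ such that every $\mathcal E'\in\mathcal U$ is of the form $\Ein(\griem')$ for some smooth metric $\griem'$ near $g$.

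**Passing from the local neighborhood to arbitrary $\mathcal E$.** The point is that the target $\mathcal E$ need not be close to $\Ein(g)$; this is where the conformal factor $f$ and Lemma~\ref{maintheoremPseudoRiem} enter. We regard both $\mathcal E$ and $\Ein(g)$ as SSR-metrics (they have constant rank and signature by hypothesis, and $\Ein(g)$ does because $g$ is parallel Ricci, hence $\Ric(g)$ has constant eigenvalues). They have the same rank and signature pointwise, so Lemma~\ref{maintheoremPseudoRiem} (with the background metric taken to be $g$, and $p$ chosen large enough, say $p>n$, so that $L^p$ control gives $C^0$ control via Sobolev embedding after an elliptic regularity bootstrap — or more carefully, one first smooths and then applies the lemma in a higher norm) produces a diffeomorphism $\Phi$ and a positive smooth function $h$ with $\Phi^*(h\,\mathcal E)$ as close as we like to $\Ein(g)$. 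Choosing the closeness smaller than the radius of $\mathcal U$, we obtain a smooth metric $\griem'$ near $g$ with $\Ein(\griem')=\Phi^*(h\,\mathcal E)$. Now apply geometric invariance: $\Ein((\Phi^{-1})^*\griem') = (\Phi^{-1})^*\Ein(\griem') = (\Phi^{-1})^*\Phi^*(h\,\mathcal E) = h\,\mathcal E$. Setting $\griem := (\Phi^{-1})^*\griem'$ (still a smooth Riemannian metric) and $f := h$ finishes the proof, since $\Ein(\griem)=f\,\mathcal E$.

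**Main obstacle.** The delicate point is matching the function-space framework of the implicit function theorem with the $L^p$-only conclusion of Lemma~\ref{maintheoremPseudoRiem}: the implicit function theorem needs $\Phi^*(h\,\mathcal E)$ close to $\Ein(g)$ in a norm strong enough (e.g. $C^{k,\alpha}$) for elliptic theory, whereas the lemma only delivers $L^p$-closeness. One resolves this by noting that the construction underlying Lemma~\ref{maintheoremPseudoRiem} — which realizes the conformal factor and diffeomorphism explicitly — actually yields smooth objects and can be run so that the error is small in any prescribed Sobolev norm $W^{k,p}$; alternatively, one inserts an intermediate smoothing/approximation step. A secondary technical point is verifying that the kernel/cokernel of the gauged linearized operator is governed precisely by the spectral condition on $\DeltaL$ and the non-degeneracy of $\Ein(g)$, including the $\Lambda=0$ exceptional case where one must quotient out (or otherwise account for) the conformal direction spanned by $g$; this is handled exactly as in DeTurck's original treatment \cite{DeturckEinstein}, now made possible globally because parallel Ricci curvature makes all the relevant curvature terms in the Weitzenböck-type formula constant coefficient.
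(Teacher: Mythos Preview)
Your overall architecture---approximate $\Ein(g)$ by $\Phi^*(h\,\mathcal E)$ via Lemma~\ref{maintheoremPseudoRiem}, invert $\Ein$ locally, then undo $\Phi$ by naturality---is exactly the paper's strategy. The gap is in how you resolve what you correctly flag as the ``main obstacle.''

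Your proposed fix, that the construction behind Lemma~\ref{maintheoremPseudoRiem} ``can be run so that the error is small in any prescribed Sobolev norm $W^{k,p}$,'' is false. In that construction the diffeomorphism $\Phi$ and conformal factor $f$ transition across a neighbourhood of the $(n-1)$-skeleton of a triangulation; on that collar the pointwise error $|\Phi^*(f\gpseb)-\gpsea|$ is merely \emph{bounded} (by $\max|\gpseb|+2\max|\gpsea|$), not small, and the $L^p$ smallness comes entirely from the small \emph{volume} of the collar. No control on derivatives is available there (indeed $f$ jumps from values $\sim 1/\alpha_x$ to $1$), so $W^{k,p}$ smallness for $k\geq 1$ is out of reach. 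Your alternative ``insert a smoothing step'' also fails: smoothing $\Phi^*(h\,\mathcal E)$ destroys the exact relation to $\mathcal E$ that you need when you pull back by $\Phi^{-1}$ at the end, and smoothing $\mathcal E$ beforehand is moot since $\mathcal E$ is already smooth.

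The actual resolution---which is the real content of DeTurck's perturbation lemma and of Proposition~\ref{einsclose} here---is the opposite: one engineers the local inversion so that it requires only $L^p$-closeness of the target. This is done not by a naive gauge-fixed implicit function theorem in $C^{k,\alpha}$, but by a slice argument: Lemma~\ref{imric} shows that $\Ein$, restricted to a complement of the diffeomorphism orbit in $H^{2,p}$, is an immersion into $L^p$ whose image is transverse to $\delta^*(H^{1,p})$; Lemma~\ref{lemme23} then says any $\mathcal E$ that is $L^p$-close to $\Ein(g)$ can be pulled back by an $H^{1,p}$ diffeomorphism into that image. This yields Lemma~\ref{lemmeproxdif} with $k=0$, i.e.\ $\Ein(\griem)=\varphi^*\mathcal E$ for some $\griem\in H^{2,p}$, from $L^p$ data alone. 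The smoothness of $\griem$ is recovered afterwards by a separate elliptic regularity bootstrap (page~364--365 of \cite{DeturckEinstein}, plus \cite{Deturck-Kazdan}), using that $\mathcal E$ itself is smooth---not by strengthening the approximation. You should rewrite your local-inversion step to make explicit that the target space is $L^p$ and that the isomorphism you invoke is $H^{2,p}\to L^p$; then the obstacle disappears.
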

The proof goes by combining   the Lemma \ref{maintheoremPseudoRiem} ,  the local inversion result of Proposition \ref{einsclose} for weak regular metric (where the conformal factor  $f$ is not required) and 
a regularity  argument.
We have then solved the problem up to a positive function $f$. Here  we do not expect  that $f$  can be taken equals to one in general,
this will be the subject of future investigations.

Parallel Ricci metrics,  are (locally) products of Einstein metrics (see eg. 
\cite{Wu:Holonomy}).
They exists on the simplest examples  of manifolds who do not admit Einstein metrics,
like $\S^1\times\S^2$, $\Sigma_g\times\S^2,$ ($g\geq1$) or $\Sigma_g\times\mathbb T^2,$ ($g\geq2$) where $\Sigma_g$ is a surface of genius
$g$. They are also static solutions of some geometric fourth order flows (eg. $\partial_tg=\Delta_g\Ric (g)$).
Finally they are particular cases of Riemannian manifolds with Harmonic curvature (or equivalently Codazzi Ricci tensor).

Our global  result show once again that  such  metrics with covariantly constant Ricci tensor deserve a particular attention.

\medskip

{\small\sc Acknowledgments} :  I am grateful to Philippe Delano\"e for comments
and improvements, and to Alexandra Barbieri and Fran\c{c}ois Gautero
for the picture of the simplex.

\section{$L^p$ closeness  of some Sub-Semi-Riemannian metrics}
We follow the section 3 called "approximation lemma" in \cite{DeturckEinstein}
in order to verify that all the step there can be adapted for SSR-metrics
as above. This will prove the Lemma \ref{maintheoremPseudoRiem}.

 We will keep almost the same notations as in \cite{DeturckEinstein}, just replacing  $S$ and $R$ there respectively by $\gpsea$ and $\gpseb$ here.\\

Let $\gpsea$ and $\gpseb$ be as in the introduction,  
we thus assume they have the same signature and the same rank. 
For the rest of the section we fix a Riemannian metric $g$, an $\epsilon>0$
and $p\in[1,+\infty)$.
All the measures, volumes,  and norms are understood with respect to $g$.\\

At each point $x\in M$, 
the two SSR-metric $\gpsea$ and $\gpseb$
having  the same rank and signature,  there exists  an orientation preserving 
automorphism $u_x$ of $T_xM$, 
such that
$$
\gpseb_x(u_x(.),u_x(.))=\gpsea_x(.,.).
$$
For $x\in M$, the following  construction can be performed  using the $g$-exponential map at $x$. 
 There exists an open set $U_x$ such that :\\

\noindent$(i)$  $U_x$ is contained in a  
coordinate neighborhood of $x$ where
in this coordinate (centered at 0), 
up to a positive automorphism $u_x$ of $T_xM$,
$\gpseb$ is equal to $\gpsea$ at $x$:
$$
^tu_x\,\gpseb_x\,u_x=\gpsea_x,
$$
(ii) For any positive real $\alpha_x$, the linear change of coordinates
$$
\Phi_x:=\sqrt{\alpha_x}\,u_x
$$
satisfies  on $U_x$ the estimate (the left hand side of which does not depend upon $\alpha_x$, and vanishes at the origin),
\bel{inegaii}
\left|(\Phi_x^*\frac1{\alpha_x}\gpseb)_y-\gpsea_y\right|^p\leq\min\left(\frac{\epsilon^p}{2\text{Vol}(M)},
|\gpsea_y|^p\right).
\ee
We consider a triangulation of $M$ where each simplex $S$ lies in the interior of some
$U_x$ with $x\in \mathring S$. Since the point $x$ belong to the  {\it interior} of the simplex $S$, shrinking $\alpha_x$ if necessary, we are sure that
$\Phi_x$ send $S$ into $S$ (the norm of $\Phi_x$ approaches  zero when $\alpha_x$ tends to zero).\\

Let $\Omega$, $\Omega_1$, $\Omega_2$, $\Omega_3$ be some open neighbourhoods of the  $(n-1)$ dimensional skeleton (composed with union of the boundary of all simplex) 
with the properties :
$$
\text{Vol}(\Omega)<\frac{\epsilon^p}{2(\max_M|\gpseb|+2\max_M|\gpsea|)^p},
$$
and 
$$
\Omega_3\subset\overline\Omega_3\subset
\Omega_2\subset\overline\Omega_2\subset\Omega_1\subset\overline\Omega_1\subset
\Omega.
$$
The rest of the proof in section 3 of  \cite{DeturckEinstein} is based on triangular inequalities 
between norms of tensors and can be implemented 
here without any change.
For a better understanding, though, we provide further details of the figure page 368 of
\cite{DeturckEinstein}, specifying the estimates that occur
 on the differents parts of the simplex,
see figure \ref{FigureSimplex}.
On  the picture, we have  denoted the error $|\Phi^*(f\gpseb)-\gpsea|$  by $e$ :
$$
e=|\Phi^*(f\gpseb)-\gpsea|.
$$
On  the inner part $T$ of the simplex,  $e$ is estimated by (\ref{inegaii}).
The transition of the diffeomorphism $\Phi$, on the middle ring $R_2=S\cap(\Omega_1\backslash \Omega_2)$,
from $\Phi_x$ to the identity, still exist  because our $\Phi_x=\sqrt{\alpha_x}\,u_x$ is  an orientation preserving map with norm less
than 1 as in \cite{DeturckEinstein}.
\begin{figure}[H]
   \includegraphics[ angle=-90,scale=0.6]{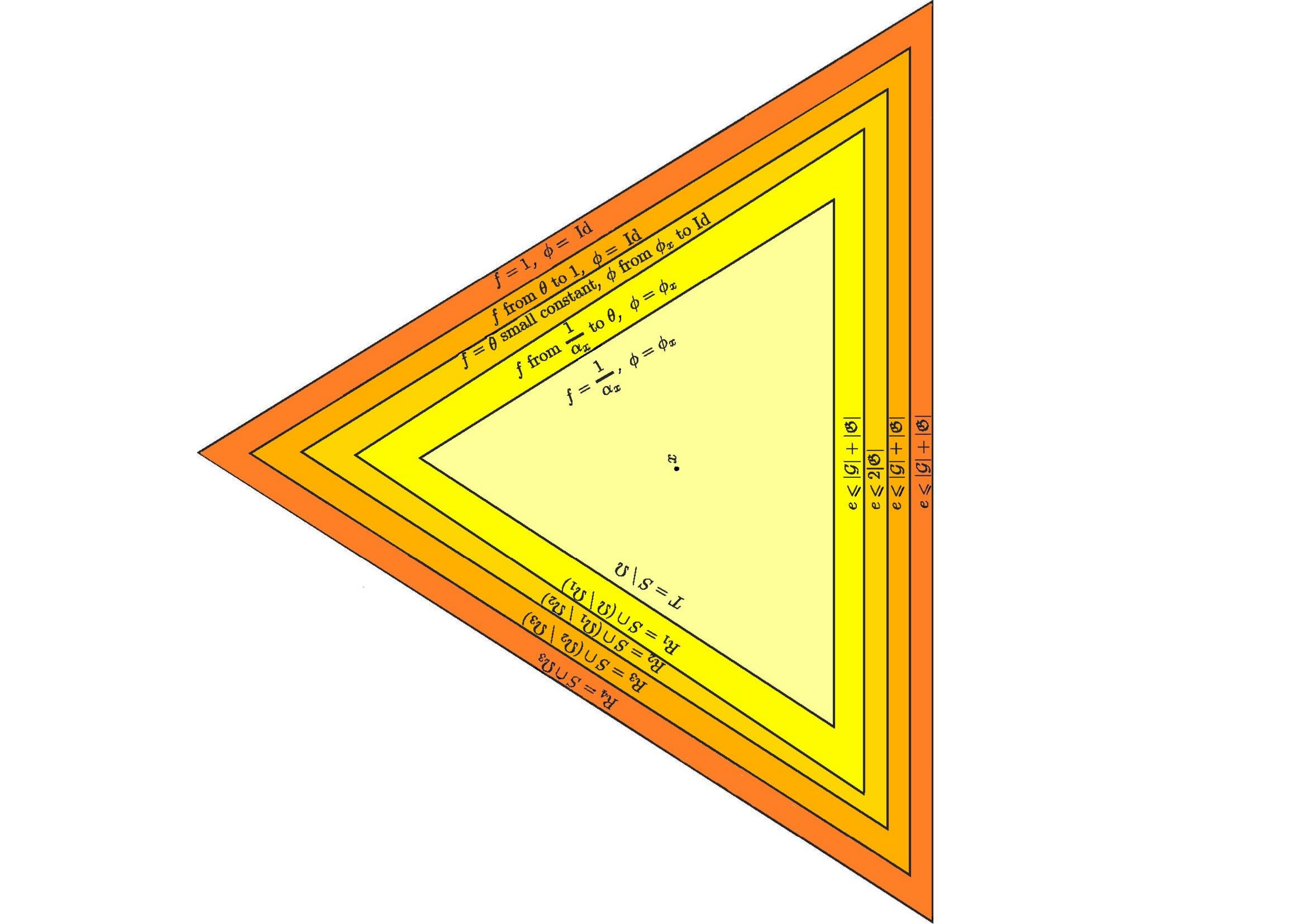}
	\vspace{-5cm}
	\caption{The simplex $S$ with the values of $f$ and $\Phi$, and the estimates  of $e$.}
	\label{FigureSimplex}
\end{figure}
\begin{exemple}
The simplest non trivial example consists  of a product of manifolds 
$M=\mathcal X\times \mathcal Y\times \mathcal Z$ with the two SSR-metrics of the form
$$
\gpsea(x,y,z)=-g_{\mathcal X,y,z}(x)\oplus g_{\mathcal Y,x,z}(y)\oplus 0_{\mathcal Z},
$$
where $g_{\mathcal X,y,z}$ is a family of Riemannian metrics on $\mathcal X$,
depending on the parameters $y,z$ and smooth in all of its arguments.
Here, any of the three manifolds  but one may  be  reduced to  a point.
\end{exemple}
\section{Solvability of a Ricci type equation}
We revisit the section 2 of \cite{DeturckEinstein}
called ``perturbation lemma''.\\

We first need to introduce some operators.
The divergence of a symmetric 2-tensor field and its $L^2$ adjoint acting on one form are
$$
(\delta h)_j:=-\nabla^i h_{ij}\;,\;\;\;\;(\delta^*v)_{ij}:=\frac12(\nabla_iv_j+\nabla_jv_i).
$$
The gravitationnal operator acting on symmetric 2-tensors is 
$$
G(h):=h-\frac12\Tr_g(h)g.
$$
The Lichnerowicz Laplacian is \footnote{Different sign convention with DeTurck}
$$
\Delta_L=\nabla^*\nabla+2\Ric-2\Riem.
$$
It appears  in  the Linearization of the Ricci operator :
$$
D\Ric(g)=\frac12\Delta_L+\delta^*\delta\,G.
$$
The Hodge Laplacian acting on one forms is
$$
\Delta_H=\Delta+\Ric=\nabla^*\nabla+\Ric=d^*d+dd^*.
$$
We also define the following Laplacian 
$$
\Delta_V:=2\,\delta\,G\,\delta^*=\nabla^*\nabla-\Ric=\Delta_H-2\Ric.
$$
We denote by $V$ its finite dimensionnal kernel, composed of smooth one forms (by elliptic 
regularity).\\

We start with the equivalent of proposition 2.1 in \cite{DeturckEinstein}.
\begin{proposition}\label{einsclose}
Let $(M,g)$ be a smooth Riemannian manifold with parallel Ricci curvature.
Let $\Lambda\in\R$ such that  $\Ein(g)$ is non degenerate and  that 
$-2\Lambda$ is not in the spectrum of the Lichnerowicz Laplacian. 
Let  $k\in\N$ and $p>n$.  
Then for any  ${\mathcal E}$  close to $\Ein(g)$ in $H^{k+1,p}(M,\mathcal S_2)$, there exist a Riemannian metrics $\griem$ in 
$H^{k+1,p}(M,\mathcal S_2)$ such that
$$
\Ein(\griem)={\mathcal E}.
$$
\end{proposition}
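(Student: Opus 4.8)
The plan is to mimic DeTurck's ``perturbation lemma'' (\S2 of \cite{DeturckEinstein}), making only two formal changes: the background $g$ is now merely parallel-Ricci, so $\Ein(g)=\Ric(g)+\Lambda g$ is a \emph{parallel}, non-degenerate symmetric $2$-tensor rather than a nonzero multiple of $g$; and the operator under study is $\Ein$ in place of $\Ric$. Since the only properties of $g$ that DeTurck's argument really uses are $\nabla\,\Ein(g)=0$ and the invertibility of $\Ein(g)$ as a bilinear form, both available here, the scheme carries over. The first move is to break the diffeomorphism invariance of $\Ein$. As $\mathcal E$ is close in $H^{k+1,p}(M,\mathcal S_2)\hookrightarrow C^0(M)$ to the non-degenerate $\Ein(g)$, it is itself non-degenerate and has an inverse $\mathcal E^{-1}$; one then passes to a gauged operator
\[
\mathcal N_{\mathcal E}(\griem):=\Ein(\griem)+\delta^*_{\griem}\,\alpha_{\griem}(\mathcal E),
\]
where $\alpha_{\griem}(\mathcal E)$ is a DeTurck-type one-form built from $\mathcal E$, $\mathcal E^{-1}$ and the connection of $\griem$ (e.g. $\alpha_{\griem}(\mathcal E)=\mathcal E^{-1}\centerdot\delta_{\griem}G_{\griem}\mathcal E$).

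Two facts must be checked, exactly as in \cite{DeturckEinstein}. First, $\mathcal N_{\mathcal E}$ is a quasilinear second-order \emph{elliptic} operator in $\griem$: the added $\delta^*$-term is designed to cancel the non-elliptic part $\delta^*\delta G$ of $D\Ric(g)$, the residual contributions being controlled because $\mathcal E$ is non-degenerate. Second, when $\mathcal E=\Ein(g)$ the contracted second Bianchi identity gives $\delta_gG_g\Ein(g)=0$, hence $\alpha_g(\Ein(g))=0$, $\mathcal N_{\Ein(g)}(g)=\Ein(g)$, and — this is where $\nabla\,\Ein(g)=0$ enters, reducing the computation to DeTurck's Einstein case — the linearization is
\[
D_{\griem}\mathcal N_{\Ein(g)}(g)=\tfrac12\bigl(\Delta_L+2\Lambda\bigr)
\]
on symmetric $2$-tensors.

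Next I would run the implicit function theorem. Since $p>n$, $H^{k+1,p}$ is a Banach algebra, so $(\griem,\mathcal E)\mapsto\mathcal N_{\mathcal E}(\griem)-\mathcal E$ is a smooth map between the relevant Sobolev spaces with a zero at $(g,\Ein(g))$; by hypothesis $-2\Lambda\notin\operatorname{spec}\Delta_L$, so the self-adjoint elliptic operator $\tfrac12(\Delta_L+2\Lambda)$ is an isomorphism there, and the implicit function theorem yields, for every $\mathcal E$ in an $H^{k+1,p}$-neighbourhood of $\Ein(g)$, a unique nearby $\griem\in H^{k+1,p}(M,\mathcal S_2)$ solving $\mathcal N_{\mathcal E}(\griem)=\mathcal E$; being $C^0$-close to $g$, it is a Riemannian metric. (When $\Lambda=0$ one has $\Delta_L g\equiv0$, so one works on the $L^2(g)$-orthogonal complement of $\R g$ and allows that one conformal direction, harmless since $\Ein$ ignores it in that case — DeTurck's footnote.)

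Finally one must see that the gauge term vanishes on the solution. Applying $\delta_{\griem}G_{\griem}$ to $\mathcal N_{\mathcal E}(\griem)=\mathcal E$, and using $\delta_{\griem}G_{\griem}\Ein(\griem)=0$ (contracted Bianchi, together with $\delta_{\griem}G_{\griem}\griem=0$) and $\delta_{\griem}G_{\griem}\delta^*_{\griem}=\tfrac12\Delta_V$, one obtains a \emph{homogeneous} second-order elliptic equation for $\alpha_{\griem}(\mathcal E)$ on one-forms; at $(g,\Ein(g))$ its operator is injective — here both the non-degeneracy of $\Ein(g)$ and $-2\Lambda\notin\operatorname{spec}\Delta_L$ enter, through the Weitzenböck identity $\Delta_L|_{\Lambda^1}=\Delta_H$ relating $\Delta_V$, $\Delta_H$ and $\Ric$ — and injectivity of elliptic operators being open, it persists for $(\griem,\mathcal E)$ near $(g,\Ein(g))$, whence $\alpha_{\griem}(\mathcal E)\equiv0$ and $\Ein(\griem)=\mathcal E$. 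The step I expect to be the real obstacle is the linearization identity $D_{\griem}\mathcal N_{\Ein(g)}(g)=\tfrac12(\Delta_L+2\Lambda)$: one must verify that DeTurck's \S2 computation, done there for $\Ein(g)$ a constant multiple of $g$, still collapses to the Lichnerowicz Laplacian when $\Ein(g)$ is merely parallel and non-degenerate — i.e. that the lower-order terms produced by the $\mathcal E^{-1}$-contraction in the gauge one-form cancel. The parallelism $\nabla\,\Ein(g)=0$ is exactly what makes this go through; granting it, this and the injectivity in the last step are the routine (if delicate) bookkeeping inherited from \cite{DeturckEinstein}.
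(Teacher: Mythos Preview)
Your strategy is \emph{not} the one the paper follows. The paper (tracking DeTurck's \S2 literally) works via orbit geometry: it first proves a slice lemma for the diffeomorphism action near the parallel non-degenerate tensor $E=\Ein(g)$ (Lemma~\ref{lemme23}), then shows that the restriction of $\Ein$ to a carefully chosen transversal
\[
K=\frac{\ker\delta G\cap H^{k+2,p}}{\delta^*(V)}\oplus G\delta^*(V)
\]
is an immersion whose image is complementary to $\delta^*(H^{k+1,p})$ (Lemma~\ref{imric}), combines these to get $\Ein(\griem)=\varphi^*\mathcal E$ for some $\varphi\in H^{k+1,p}$, and finally upgrades the regularity of $\varphi$ using the non-degeneracy of $\Ein(\griem)$. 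No gauge-fixed elliptic operator is ever written down; the heavy lifting is the algebraic verification that $D\Ein(g)$ and $\Delta_L+2\Lambda$ respect the splitting of Lemma~\ref{lemmegrossplit}, using the parallel-Ricci commutation identities $\delta\Delta_L=\Delta_H\delta$, $\Delta_L\delta^*=\delta^*\Delta_H$. Your route --- solve a gauge-modified elliptic equation by the implicit function theorem, then kill the gauge term a posteriori --- is a legitimate alternative in spirit, and it would avoid the final regularity-bootstrap for $\varphi$.

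There is, however, a genuine gap in your last step. With your choice $\alpha_{\griem}(\mathcal E)=\mathcal E^{-1}\!\centerdot\delta_{\griem}G_{\griem}\mathcal E$, applying $\delta_{\griem}G_{\griem}$ to the solved equation and writing $\delta_{\griem}G_{\griem}\mathcal E=\mathcal E\alpha$ gives, at the base point,
\[
\tfrac12\Delta_V\alpha-\Ein(g)\,\alpha=0,\qquad\text{i.e.}\qquad \bigl(\nabla^*\nabla-3\Ric-2\Lambda\bigr)\alpha=0.
\]
This is \emph{not} the operator $\Delta_H+2\Lambda$; the Weitzenb\"ock identity you invoke relates $\Delta_V$, $\Delta_H$ and $\Ric$, but the sign in front of $\Ein(g)$ comes out wrong, and injectivity of $\nabla^*\nabla-3\Ric-2\Lambda$ on one-forms does \emph{not} follow from $-2\Lambda\notin\operatorname{spec}\Delta_L$ together with non-degeneracy of $\Ein(g)$. (By contrast, $\Delta_H+2\Lambda$ \emph{is} injective under those hypotheses: if $(\Delta_H+2\Lambda)v=0$ then $(\Delta_L+2\Lambda)\delta^*v=0$, so $\delta^*v=0$, hence $v$ is Killing, whence $\Delta_Hv=2\Ric\,v$ and so $\Ein(g)v=0$, forcing $v=0$.) To make your scheme go through you would need either a different gauge one-form whose Bianchi-propagated equation is governed by $\Delta_H+2\Lambda$ (equivalently $\Delta_V+2\Ein(g)$), or an independent argument for the injectivity of $\nabla^*\nabla-3\Ric-2\Lambda$ --- and the latter is simply an extra spectral assumption, not a consequence of the stated hypotheses. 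You also leave the linearization identity $D_{\griem}\mathcal N_{\Ein(g)}(g)=\tfrac12(\Delta_L+2\Lambda)$ unchecked; this is not mere bookkeeping, since varying $\delta_{\griem}G_{\griem}$ on a fixed \emph{parallel} tensor produces first-order terms in $h$ that must conspire with the $\mathcal E^{-1}$-contraction to cancel $\delta^*\delta G$ exactly, and the computation is genuinely different from the Einstein case where $\Ein(g)$ is a multiple of $g$.
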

In \cite{DeturckEinstein}, the
 proof of the corresponding proposition is given by a succession of lemmas. We thus revisit them  one after the other.
Some care is needed because we have to replace Ric and $-\Delta_L$  there, respectively with Ein 
and  $\Delta_L+2\Lambda$ here.
Furthermore, in our context, the  operator $\Delta_L+2\Lambda$ has no kernel 
whereas the kernel of $\Delta_L$ is nonempty  in \cite{DeturckEinstein}, spanned by $g$.
We clearly have also  for any Riemannian metrics $g$, the Bianchi identity
$$\delta G(\Ein(g))=0.$$

We start with a local study of the action of the diffeomorphim group
on the covariant symetrics 2-tensors, near a non degenerate parallel one.
The result obtained remains  in the spirit   of  the local study near a Riemmanian  metric by Berger, Ebin, or Palais  (see eg. the  lemma 2.3 of \cite{DeturckEinstein}). Here the metric tensor is replaced by a non degenerate parallel tensor field.
\begin{lemma}\label{lemme23}
Let  $E$ be a smooth, non degenerate and parallel symmetric two tensor field.
Let $\mathcal X$ be a smooth Banach submanifold of $H^{k,p}(M,\mathcal S_2)$, whose tangent 
space at $E$ is a complementary of $\delta^*(H^{k,p}(M,\mathcal T_1))$.
Then for any ${\mathcal E}$ close enough to $E$ in $H^{k,p}(M,\mathcal S_2)$, there exist
an  $H^{k+1,p}$ diffeomorphism $\Phi$ close to the identity such that $\Phi^*{\mathcal E}\in\mathcal X$.
\end{lemma}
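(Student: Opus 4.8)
The plan is to run the classical slice argument of Berger--Ebin and Palais, along the lines of Lemma~2.3 of \cite{DeturckEinstein}, the only change being that the parallel tensor $E$ plays, in the infinitesimal action of the diffeomorphism group, the role that the metric $g$ plays in the Riemannian case. First I would fix the Levi-Civita connection $\nabla$ of $g$ and parametrize a neighbourhood of the identity in the group of $H^{k+1,p}$ diffeomorphisms of $M$ by vector fields $w$ near $0$ in $H^{k+1,p}(M,TM)$, setting $\phi_w(x):=\exp_x(w(x))$ with $\exp$ the $g$-exponential map. I then consider the map
\[
F(w,X):=\phi_w^{*}X\in H^{k,p}(M,\mathcal S_2),
\]
defined for $w$ near $0$ and $X$ near $E$ in the Banach submanifold $\mathcal X$. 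In the range of Sobolev exponents under consideration (in particular $p>n$, as in Proposition~\ref{einsclose}) this $F$ is of class $C^1$ between Banach manifolds by the Ebin--Palais $\omega$-lemma; the loss of one derivative in the pullback action is exactly why one parametrizes the diffeomorphisms in $H^{k+1,p}$ while the tensors live in $H^{k,p}$.

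Next I would compute the differential of $F$ at $(0,E)$: it is
\[
DF(0,E)(v,Y)=\mathcal L_vE+Y,
\]
for $v\in H^{k+1,p}(M,TM)$ and $Y\in T_E\mathcal X$. The decisive point, and the one place the argument genuinely differs from the Riemannian case, is the identity, valid because $E$ is $\nabla$-parallel,
\[
\mathcal L_vE=2\,\delta^{*}\bigl(E(v,\cdot)\bigr).
\]
Indeed, in local coordinates $(\mathcal L_vE)_{ij}=v^{k}\nabla_kE_{ij}+E_{kj}\nabla_iv^{k}+E_{ik}\nabla_jv^{k}$, and $\nabla E=0$ kills the first term and rewrites the last two as $\nabla_i(E_{kj}v^{k})+\nabla_j(E_{ik}v^{k})$, i.e. twice $\delta^{*}$ of the $1$-form $E(v,\cdot)$. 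Since $E$ is smooth and non-degenerate, $v\mapsto E(v,\cdot)$ is a Banach isomorphism $H^{k+1,p}(M,TM)\to H^{k+1,p}(M,\mathcal T_1)$; hence the range of $v\mapsto\mathcal L_vE$ is exactly $\delta^{*}\bigl(H^{k+1,p}(M,\mathcal T_1)\bigr)$, which is closed in $H^{k,p}(M,\mathcal S_2)$ because $\delta^{*}$ is overdetermined elliptic, and its kernel $\mathcal K$ is finite dimensional (the $v$ for which $E(v,\cdot)$ is dual to a Killing field of $g$). Using the hypothesis that $T_E\mathcal X$ is a closed complement of that range, $DF(0,E)$ is surjective with kernel $\mathcal K\times\{0\}$; restricting $F$ to $N\times\mathcal X$ for a closed complement $N$ of $\mathcal K$ in $H^{k+1,p}(M,TM)$ makes the differential at $(0,E)$ a Banach-space isomorphism onto $H^{k,p}(M,\mathcal S_2)$.

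It then suffices to apply the inverse function theorem in Banach spaces: $F|_{N\times\mathcal X}$ is a $C^1$ diffeomorphism from a neighbourhood of $(0,E)$ onto a neighbourhood $\mathcal W$ of $E$ in $H^{k,p}(M,\mathcal S_2)$, so for every $\mathcal E\in\mathcal W$ there is a unique $(w,X)$ near $(0,E)$ with $\phi_w^{*}X=\mathcal E$; then $X=(\phi_w^{-1})^{*}\mathcal E$ belongs to $\mathcal X$, and $\Phi:=\phi_w^{-1}$ --- again an $H^{k+1,p}$ diffeomorphism close to the identity, inversion being continuous in the diffeomorphism group --- satisfies $\Phi^{*}\mathcal E\in\mathcal X$. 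I expect the main obstacle to be the first step: checking carefully that $(w,X)\mapsto\phi_w^{*}X$ is of class $C^1$ on the relevant Sobolev manifolds (diffeomorphisms of class $H^{k+1,p}$, tensors of class $H^{k,p}$), which is where the restrictions on $k$ and $p$ are really used and is the technical core of the Ebin--Palais machinery invoked by DeTurck. Everything else reduces to the short Lie-derivative computation above, the linear-algebra hypothesis on $\mathcal X$, and standard $L^{p}$ elliptic theory for the overdetermined operator $\delta^{*}$.
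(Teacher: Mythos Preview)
Your argument is correct and follows essentially the same slice-type approach as the paper: both hinge on the identity $\mathcal L_vE=2\,\delta^{*}(E(v,\cdot))$ for parallel $E$, the non-degeneracy of $E$ to identify the range with $\delta^{*}(H^{k+1,p}(M,\mathcal T_1))$, and an implicit/inverse function theorem. The only differences are cosmetic---the paper parametrizes diffeomorphisms by the time-one flow rather than the exponential map and sets things up as an implicit function theorem with $\mathcal E$ as parameter---but the content is identical.
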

\begin{proof}
The tensor field $E$ being parallel, its Lie derivative  in the direction of a vector field $v$ is
$$
\mathcal L_{v}E=2\delta^*(Ev).
$$
Locally, the submanifold $\mathcal X$ can be seen as the image of an immersion
$\mathfrak X:U\longrightarrow H^{k,p}(M,\mathcal S_2)$, with $\mathfrak X(0)=E$.
We define $\mathcal T^\bot$ to be the set of vector fields 
$v\in H^{k+1,p}(M,\mathcal T_1)$ such that $E\,v$ is $L^2$-orthogonal \footnote{closed complementary suffice}
to $\ker \delta^*$.

Let 
$$
F:U\times\mathcal T^\bot\times H^{k,p}(M,\mathcal S_2)\longrightarrow
H^{k,p}(M,\mathcal S_2),
$$
defined by
$$
F(k,Y,{\mathcal E})=\Phi^*_{Y,1}({\mathcal E})-\mathfrak X(k),
$$
where $\Phi_{Y,1}$ is the flow of the vector field $Y$ at time 1.
We have $F(0,0,E)=0$ and the linearisation of $F$ in the first two variables is
$$
D_{(k,Y)}F(0,0,E)(l,X)=2\delta^*(E\,X)-D\mathfrak X(0)l.
$$
Now because 
$$
H^{k,p}(M,\mathcal S_2)=\delta^*(H^{k+1,p}(M,\mathcal T_1))\oplus \Im D\mathfrak X(0),
$$
and $E$ is non degenerate, then $D_{(k,Y)}F(0,0,E)$
is an isomorphism. From  the implicit function theorem, for ${\mathcal E}$
close to $E$, there exist $k$ and $Y$ small such
that $F(k,Y,{\mathcal E})=0$.
\end{proof}
Let us recall  the lemma 2.5 in \cite{DeturckEinstein} :\footnote{
It seems there is a misprint in the proof this lemma:
The Ricci term for $\delta G \delta^*$ at the top of page 362  in \cite{DeturckEinstein} has a different sign.}

\begin{lemma}\label{lemmegrossplit}
For $k\geq1$, 
we have 
$$
H^{k,p}(M,\mathcal S_2)=\frac{(\ker \delta G \cap H^{k,p}(M,\mathcal S_2))}{\delta^*(V)}\oplus \delta^*(H^{k+1,p}(M,\mathcal T_1))\oplus G\delta^*(V).
$$
\end{lemma}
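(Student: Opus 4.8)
The plan is to follow the Berger--Ebin--DeTurck scheme, the two driving operators being $\delta G\colon\mathcal S_2\to\mathcal T_1$, which is underdetermined elliptic with formal $L^2$-adjoint $(\delta G)^*=G\delta^*$ (using that $G$ is self-adjoint and $\delta^*$ is the adjoint of $\delta$), and the second-order composition $\delta G\delta^*=\tfrac12\Delta_V=\tfrac12(\nabla^*\nabla-\Ric)$, a self-adjoint elliptic operator on one-forms whose kernel $V$ is finite-dimensional and consists of smooth forms. No curvature hypothesis on $(M,g)$ is needed for this particular decomposition, exactly as in \cite{DeturckEinstein}. First I would record the ingredients: the topologically direct, $L^2$-orthogonal splitting of one-forms
$$
H^{k-1,p}(M,\mathcal T_1)=V\oplus\Delta_V\!\left(H^{k+1,p}(M,\mathcal T_1)\right),\qquad k\ge1,
$$
the fact $\ker\delta^*=\{\text{Killing fields}\}=:\mathcal I\subset V$, and the pointwise algebra fact $G\delta^*w=0\iff\delta^*w=0$ (indeed $G\delta^*w=0$ forces $\delta^*w=-\tfrac12(\delta w)g$, whose trace gives $\delta w=0$ when $\dim M\ne2$, the case $\dim M=2$ being irrelevant since then $G$ plays no role). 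In particular $\delta^*$ and $G\delta^*$ both annihilate $\mathcal I$.

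Next I would produce the decomposition constructively. Fix $h\in H^{k,p}(M,\mathcal S_2)$, so $\delta Gh\in H^{k-1,p}(M,\mathcal T_1)$. Since $\langle\delta Gh,w\rangle_{L^2}=\langle Gh,\delta^*w\rangle_{L^2}=0$ for every $w\in\mathcal I$, the $L^2$-orthogonal projection $\Pi_V(\delta Gh)$ onto $V$ lies in $\mathcal I^\perp\cap V$. The symmetric operator $\Psi\colon V\to V$, $\Psi w:=\Pi_V\!\left(\delta G(G\delta^*w)\right)$, satisfies $\langle\Psi w,w\rangle_{L^2}=\|G\delta^*w\|_{L^2}^2$, so $\ker\Psi=\mathcal I$ and $\Psi$ maps $V$ onto $\mathcal I^\perp\cap V$. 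Choose $w\in V$ with $\Psi w=\Pi_V(\delta Gh)$ and put $h_C:=G\delta^*w$; then $\delta G(h-h_C)$ is $L^2$-orthogonal to $V$, so the one-form splitting yields $v\in H^{k+1,p}(M,\mathcal T_1)$ with $\tfrac12\Delta_Vv=\delta G(h-h_C)$, i.e. $\delta G\!\left(h-h_C-\delta^*v\right)=0$. Setting $h_B:=\delta^*v$ and $h_A:=h-h_B-h_C\in\ker\delta G$ gives $h=h_A+h_B+h_C$; absorbing the $\delta^*(V)$-component of $h_A$ into $h_B$ (legitimate since $\delta^*(V)\subset\delta^*(H^{k+1,p}(M,\mathcal T_1))$) places $h$ in the stated sum.

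Then I would check that the sum is direct in the asserted sense. First, $\ker\delta G\cap\delta^*\!\left(H^{k+1,p}(M,\mathcal T_1)\right)=\delta^*(V)$, because $\delta^*v\in\ker\delta G$ forces $\Delta_Vv=2\delta G\delta^*v=0$; this is exactly why the first summand is taken modulo $\delta^*(V)$ — one fixes a closed complement $W$ of $\delta^*(V)$ in $\ker\delta G\cap H^{k,p}$, and then $W\oplus\delta^*(H^{k+1,p}(M,\mathcal T_1))$ is direct. Second, $\left(\ker\delta G+\delta^*(H^{k+1,p}(M,\mathcal T_1))\right)\cap G\delta^*(V)=0$: if $G\delta^*w=a+\delta^*v$ with $w\in V$ and $a\in\ker\delta G$, then applying $\delta G$ and projecting onto $V$ gives $\Psi w=\Pi_V(\tfrac12\Delta_Vv)=0$, hence $w\in\mathcal I$ and $G\delta^*w=0$. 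Topological directness and smoothness of the finite-dimensional factors follow from the elliptic estimates for $\delta G$, $\delta^*$, $\Delta_V$ and elliptic regularity; the hypothesis $k\ge1$ is used precisely so that $\delta Gh\in H^{k-1,p}$ and so that solving $\Delta_Vv=2\delta G(h-h_C)$ returns $v\in H^{k+1,p}$.

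The step I expect to be the real subtlety — the only place needing more care than in the positive-definite case — is the simultaneous bookkeeping of $V$ and of the isometry algebra $\mathcal I\subset V$: the summand $G\delta^*(V)$ is effectively $G\delta^*(V/\mathcal I)$, so one must verify both that $\Pi_V(\delta Gh)$ never has a component along the $\mathcal I$-directions (automatic, since $\delta^*\mathcal I=0$) and that the pairwise overlaps of the three subspaces are controlled exactly by $\delta^*(V)$ and $\mathcal I$ with no further coincidences. Everything else is, as in section 2 of \cite{DeturckEinstein}, a string of Hodge-type arguments for the elliptic operators built from $\delta$, $\delta^*$ and $G$; one should also keep in mind the sign convention $\delta G\delta^*=+\tfrac12(\nabla^*\nabla-\Ric)$ flagged in the footnote, since it is what makes $\Delta_V$ the correct self-adjoint elliptic operator underlying the one-form splitting above.
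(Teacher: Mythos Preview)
The paper does not supply its own proof of this lemma; it simply recalls it as Lemma~2.5 of \cite{DeturckEinstein}, adding only a footnote about the sign of the Ricci term in $\delta G\delta^*$. Your proposal is a correct and detailed reconstruction of exactly that Berger--Ebin--DeTurck argument (elliptic splitting of one-forms by $\Delta_V=2\delta G\delta^*$, then pulling back to symmetric $2$-tensors via $\delta^*$ and $G\delta^*$), so it coincides with what the paper invokes by citation.
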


The equivalent of the lemma 2.6 
 in \cite{DeturckEinstein} becomes (we do not have to quotient by $R\,g$ because 
of no kernel for us, and so we do not need to adjust with a constant $c$).
\begin{lemma}\label{imric}
Suppose $k\geq 0$, $p>n$, and ${g}$ satisfies the hypotheses of theorem \ref{maintheoremRicci} let
$$
K=\frac{(\ker \delta G \cap H^{k+2,p}(M,\mathcal S_2))}{\delta^*(V)}\oplus G\delta^*(V)
$$
and define $F:K\longrightarrow H^{k,p}(M,\mathcal S_2)$ by\footnote{To avoid ambiguities,
we may take any fixed closed complementary $W$  to $\delta^*(V)$ in $\ker \delta G \cap H^{k+2,p}(M,\mathcal S_2)$ instead of the first factor of $K$}
$$
F(b):=\Ein(g+b).
$$
Then for some neigbborood $U$ of $0$, $F(U)$ is a Banach submanifold of $H^{k,p}(M,\mathcal S_2)$ whose  tangent space at $F(0)=\Ein(g)$ is a complementary space of 
$\delta^*(H^{k+1,p}(M,\mathcal T_1))$.
\end{lemma}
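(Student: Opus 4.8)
The plan is to apply Lemma \ref{lemme23} with $E=\Ein(g)$, so the main task is to identify the image of $DF(0)$ and to check it is a closed complement of $\delta^*(H^{k+1,p}(M,\mathcal T_1))$; once that is done, $F$ restricted to a small neighbourhood $U$ of $0$ is an immersion onto a submanifold, and Lemma \ref{lemme23} supplies the diffeomorphism statement for free. First I would compute the differential of $F$ at $0$. Since $\Ein(g+b)=\Ric(g+b)+\Lambda(g+b)$, we get $DF(0)b = D\Ric(g)\,b + \Lambda b = \tfrac12\Delta_L b + \delta^*\delta G\,b + \Lambda b = \tfrac12(\Delta_L+2\Lambda)b + \delta^*\delta G\,b$, using the linearization formula recorded in the excerpt. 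Now I restrict this operator to the subspace $K=\bigl(\ker\delta G\cap H^{k+2,p}(M,\mathcal S_2)\bigr)/\delta^*(V)\oplus G\delta^*(V)$ (or, to avoid the quotient, to $W\oplus G\delta^*(V)$ with $W$ a fixed closed complement of $\delta^*(V)$ inside $\ker\delta G\cap H^{k+2,p}$, as in the footnote).

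Next I would show $DF(0)|_K$ is injective with closed image complementary to $\delta^*(H^{k+1,p}(M,\mathcal T_1))$. On the factor $W\subset\ker\delta G$ the term $\delta^*\delta G\,b$ vanishes, so there $DF(0)b=\tfrac12(\Delta_L+2\Lambda)b$; since $-2\Lambda$ is not in the spectrum of $\Delta_L$, the operator $\Delta_L+2\Lambda$ is an isomorphism of $H^{k+2,p}(M,\mathcal S_2)$ onto $H^{k,p}(M,\mathcal S_2)$, and in particular it maps $W$ injectively with closed image. On the factor $G\delta^*(V)$ one uses that $\delta G(\Ein(g))=0$ (the Bianchi identity noted in the excerpt) together with the mapping properties of $\delta G\delta^*$; here I would invoke the splitting of Lemma \ref{lemmegrossplit} and the analysis of $\Delta_V=2\delta G\delta^*$ on $V$ to see that $DF(0)$ sends $G\delta^*(V)$ isomorphically onto a complement, inside the range, of $\delta^*(H^{k+1,p})$. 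Combining the two factors, the Bianchi identity $\delta G\circ\Ein\equiv 0$ forces $\Image DF(0)\subset\ker(\text{linearized }\delta G$ at $\Ein(g))$, and a dimension/complement count against Lemma \ref{lemmegrossplit} (now with $\delta^*$ acting on $H^{k+1,p}(M,\mathcal T_1)$ rather than $V$) yields exactly that $\Image DF(0)\oplus\delta^*(H^{k+1,p}(M,\mathcal T_1))=H^{k,p}(M,\mathcal S_2)$.

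Finally, having $DF(0)|_K$ an injective operator with closed image admitting a closed complement, the implicit/inverse function theorem gives that $F(U)$ is a Banach submanifold near $F(0)=\Ein(g)$ with tangent space $\Image DF(0)$, which we have just shown is a complement of $\delta^*(H^{k+1,p}(M,\mathcal T_1))$. The subtlety that must be handled with care — and the place where our setting genuinely diverges from \cite{DeturckEinstein} — is the role of $\Lambda$: in \cite{DeturckEinstein} the operator $\Delta_L$ has the one-dimensional kernel $\R g$, forcing the auxiliary constant $c$ and the quotient by $\R g$, whereas here the hypothesis that $-2\Lambda\notin\mathrm{spec}(\Delta_L)$ makes $\Delta_L+2\Lambda$ injective, so no quotient or constant adjustment is needed. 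The main obstacle, then, is bookkeeping: carefully tracking the sign conventions for $\delta G\delta^*$ (see the footnote to Lemma \ref{lemmegrossplit}) and the substitution of $\Ein$, $\Delta_L+2\Lambda$ for $\Ric$, $-\Delta_L$ throughout, so that the orthogonality relations underlying Lemma \ref{lemmegrossplit} still line up and the complement claim is exact rather than merely a direct sum inside $H^{k,p}$.
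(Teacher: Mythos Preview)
Your outline captures the right overall strategy (show $DF(0)|_K$ is injective with closed image complementary to $\delta^*(H^{k+1,p})$, then invoke the immersion theorem), but there is a genuine gap at the heart of the argument, precisely on the factor $G\delta^*(V)$ that you treat only by hand-waving.

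The sentence ``the Bianchi identity $\delta G\circ\Ein\equiv 0$ forces $\Image DF(0)\subset\ker(\text{linearized }\delta G$ at $\Ein(g))$'' is incorrect. Linearising $\delta_\griem G_\griem\Ein(\griem)=0$ in $\griem$ at $g$ gives
\[
\delta G\,D\Ein(g)b \;=\; -\,D_g[\delta G](b)\,\Ein(g),
\]
which does \emph{not} place $D\Ein(g)b$ in $\ker\delta G$. And indeed it is false that $DF(0)(K)\subset\ker\delta G$: the paper computes explicitly, for $v\in V$,
\[
D\Ein(g)\,G\delta^*v \;=\; G\delta^*(\Ein\,v)\;+\;\tfrac{n-2}{2}\,\delta^*\delta\delta^*v,
\]
so the image of the second summand of $K$ has a nontrivial $\delta^*$-component. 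Your ``dimension/complement count'' therefore cannot go through as stated; one has to prove directly that
\[
\delta^*(H^{k+1,p})\oplus G\delta^*(\Ein V)\;=\;\delta^*(H^{k+1,p})\oplus D\Ein(g)\,G\delta^*V,
\]
which requires the explicit calculation above together with the identity $(\Delta_L+2\Lambda)G\delta^*v=2G\delta^*(\Ein v)$ and the injectivity of $\Delta_L+2\Lambda$.

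A second missing ingredient is the parallel Ricci hypothesis, which you never invoke. On $W\subset\ker\delta G$ you correctly get $DF(0)=\tfrac12(\Delta_L+2\Lambda)$ and observe it is injective, but you do not explain why its image lies back in $\ker\delta G$ (rather than spilling into $\delta^*(H^{k+1,p})$). This \emph{stability} of $\ker\delta G$ under $\Delta_L+2\Lambda$ is exactly where parallel Ricci enters: it gives the commutation $\delta\,\Delta_L=\Delta_H\,\delta$ (Lichnerowicz), hence $\delta G(\Delta_L+2\Lambda)=0$ on $\ker\delta G$. Without this, you cannot match $DF(0)(W)$ against the first summand of the Lemma~\ref{lemmegrossplit} decomposition, and the complement claim collapses. (Minor point: Lemma~\ref{lemme23} is not used in the proof of the present lemma; the two are combined only afterwards.)
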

\begin{proof}
We have to show that the derivative of $F$ at $g$ is injective and its image has the closed subspace $\delta^*(H^{k+1,p})$ as complementary.
A metric with parallel Ricci tensor  is a  local Einstein product so is smooth.
We first show that the spaces $\Im \delta^*$ and $\ker \delta G$ are ``stable'' (modulo two points of regularity)  by $\Delta_L+2\Lambda$ but also by $D\Ein(g)$ (when the metric is Ricci parallel).
Indeed, recall that  in that case \cite{Lichnerowicz:prop}:
$$
\delta\,\Delta_L=\Delta_H\,\delta,
$$
and the adjoint version :
$$
\Delta_L\,\delta^*=\delta^*\,\Delta_H.
$$
We deduce that  
$$
(\Delta_L+2\Lambda)\,\delta^*=\delta^*\,(\Delta_H+2\Lambda)=\delta^*\,(\Delta_V+2\Ein),
$$
and
$$
D\Ein(g)\, \delta^*=\frac12\delta^*\,(\Delta_H+2\Lambda)+\frac12\delta^*\,\Delta_V
=\delta^*\,(\Delta_V+\Ein)=\delta^*\,(\Delta+\Lambda)
$$
thus the ``stability'' of $\Im\delta^*$ by the two operators above.

When restricted on  the kernel of $\delta\,G$, we trivially have
$$
D\Ein(g)=\frac12(\Delta_L+2\Lambda),
$$
but also, by linearising  $\delta\, G\,\Ein(g)=0$  for instance, 
$$
\delta\,G\, (\Delta_L+2\Lambda)=0,
$$
then the stability of $\ker \delta\,G$.

We can also remark that with the formula above, 
 if $v\in V$, 
$$(\Delta_L+2\Lambda)(\delta^*v)=2\delta^*(\Ein v)=2\Ein\delta^*v$$ and $$D\Ein(g)(\delta^*v)=\delta^*(\Ein v)=\Ein\delta^*v.$$
For any function $u$, it is well known that  $\Delta_L(ug)=(\Delta u)g$, so
$$
(\Delta_L+2\Lambda)(d^*w \,g)=[(\Delta+2\Lambda) d^*w]g=[d^*(\Delta_H+2\Lambda)w]g.
$$
We obtain that 
$$
(\Delta_L+2\Lambda)G\delta^*=G\delta^*(\Delta_H+2\Lambda).
$$
If $v\in V$, we deduce
$$
(\Delta_L+2\Lambda)G\delta^*v=G\delta^*(2\Ein v).
$$
Assume that 
$-2\Lambda$ is not an eigenvalue of $\Delta_L$, 
then  $\Delta_L+2\Lambda$ is an isomorphism from $H^{k+2,p}(M,\mathcal S_2)$
to $H^{k,p}(M,\mathcal S_2)$. The image of the splitting  in Lemma \ref{lemmegrossplit} by $\Delta_L+2\Lambda$ produce:\footnote{Here also we have to replace the first factor by $(\Delta_L+2\Lambda)W$ when a choice of $W$ was made in the first factor of $K$.}
$$
H^{k,p}(M,\mathcal S_2)=\frac{(\ker \delta G \cap H^{k,p}(M,\mathcal S_2))}{\delta^*(\Ein V)}\oplus \delta^*(H^{k+1,p}(M,\mathcal T_1))\oplus G\delta^*(\Ein V).
$$
The two first factors are the same than  the image by $D\Ein(g)$ of the corresponding  spaces
in Lemma \ref{lemmegrossplit}. Let us study the image of  third one.
For $v\in V$, we compute
$$
\delta^*\delta G G\delta^* v=\delta^*\delta G (\delta^* v+\frac12d^*v\,g)
=\frac12\delta^*\delta G (d^*v\,g)=\frac{2-n}4\delta^*\delta (d^*v\,g)
$$
\begin{equation}\label{bianchisurV}
=\frac{n-2}4\delta^*dd^*v=\frac{n-2}2\delta^*\delta \delta^*v=-G\delta^*\delta \delta^*v.
\end{equation}
We deduce  for instance that
\begin{equation}\label{eqDEin}
D\Ein(g)G\delta^*V=\left[G\delta^*(\Ein\,.)+\frac {n-2}2\delta^*\delta\delta^*\right]V.
\end{equation}
Let us define 
$$
\mathcal F:=\delta^*(H^{k+1,p}(M,\mathcal T_1))\oplus G\delta^*(\Ein V).$$
We now prove that 
\begin{equation}\label{decompoF2}
\mathcal F=
\delta^*(H^{k+1,p}(M,\mathcal T_1))\oplus D\Ein(g)G\delta^*V.
\end{equation}
The fact that $\mathcal F$ is the sum of the two factors is clear by (\ref{eqDEin}).
Let $w$ in the intersection of the two factors, so 
$$
w=\delta^*u=G\delta^*\Ein v+\delta^*\delta\delta^*\frac{n-2}2v,
$$
for some $u\in H^{k+1,p}(M,\mathcal T_1)$ and $v\in V$.
Because of the decomposition of $\mathcal F$, we deduce
that $G\delta^*\Ein v=0$ thus $(\Delta_L+2\Lambda)G\delta^*v=0$, then 
$G\delta^*v=0$ and finally, by (\ref{bianchisurV}), $\frac{n-2}2\delta^*\delta\delta^*v=0$ so $w=0$.
We have obtained 
$$
H^{k,p}(M,\mathcal S_2)=\Im DF(0)\oplus \delta^*(H^{k+1,p}(M,\mathcal T_1)).
$$
We claim that $DF(0)$ is injective. Indeed, let  $h$ in the kernel of 
$DF(0)$, then $h=[u]+G\delta^*v$ with $[u]$ in the first summand of $K$. Thus $[\Delta_L+2\Lambda][u]+D\Ein(g) G\delta^*v=0$ so because of the decomposition (\ref{decompoF2}), we obtain $[\Delta_L+2\Lambda][u]=D\Ein(g) G\delta^*v=0$.
Its implies $[u]=0$ and  from equation (\ref{eqDEin}),
$v\in G\delta^*\Ein V\cap \delta^*(H^{k+1,p}(M,\mathcal T_1))=\{0\}$, so $h=0$.
\end{proof}
From the Lemma \ref{lemme23} with $E=\Ein({g})$ and Lemma \ref{imric} we directly deduce :
\begin{lemma}\label{lemmeproxdif}
If ${\mathcal E}\in H^{k,p}$ and $|{\mathcal E}-\Ein({g})|_{k,p}<\epsilon$, then there exist a metric 
$\griem\in H^{k+2,p}$ and a diffeomorphism $\varphi\in H^{k+1,p}$ for which $\Ein(\griem)=\varphi^*{\mathcal E}$.
\end{lemma}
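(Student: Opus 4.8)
The plan is to combine the two previous lemmas exactly as one combines the Berger--Ebin slice theorem with a local submersion statement. First I would invoke Lemma \ref{imric}: with $k$ and $p>n$ as given, and $g$ satisfying the hypotheses of Theorem \ref{maintheoremRicci}, the map $F:K\to H^{k,p}(M,\mathcal S_2)$, $F(b)=\Ein(g+b)$, has image $F(U)$ (for $U$ a small neighbourhood of $0$ in $K$) a Banach submanifold of $H^{k,p}(M,\mathcal S_2)$ whose tangent space at $F(0)=\Ein(g)$ is a closed complement of $\delta^*(H^{k+1,p}(M,\mathcal T_1))$. Call this submanifold $\mathcal X$; shrinking $U$ I may assume $\mathcal X$ is the image of an immersion through $\Ein(g)$, so it is precisely a ``slice'' of the type required in Lemma \ref{lemme23}.

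Next I would apply Lemma \ref{lemme23} with $E=\Ein(g)$. The hypotheses there are met: $\Ein(g)$ is smooth (a parallel Ricci metric is a local product of Einstein metrics, hence real-analytic in suitable coordinates, and $\Ein(g)=\Ric(g)+\Lambda g$ is then smooth), it is non degenerate by the standing assumption on $\Lambda$, and it is parallel because $g$ is parallel and $\Ric(g)$ is parallel, so $\Lambda g+\Ric(g)$ is parallel. The Banach submanifold $\mathcal X$ just produced has, at $E$, a tangent space complementary to $\delta^*(H^{k,p}(M,\mathcal T_1))$ — here I should be mildly careful about the regularity index, since Lemma \ref{imric} gives the complement of $\delta^*(H^{k+1,p})$ inside $H^{k,p}$, which is what Lemma \ref{lemme23} wants with its index shifted accordingly. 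Thus for any $\mathcal E$ close enough to $E=\Ein(g)$ in $H^{k,p}(M,\mathcal S_2)$ there is an $H^{k+1,p}$ diffeomorphism $\Phi$, close to the identity, with $\Phi^*\mathcal E\in\mathcal X=F(U)$.

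Finally, since $\Phi^*\mathcal E\in F(U)$, by the definition of $F$ there exists $b\in U\subset K\subset H^{k+2,p}(M,\mathcal S_2)$ with $\Ein(g+b)=\Phi^*\mathcal E$. Set $\griem:=g+b\in H^{k+2,p}(M,\mathcal S_2)$; for $\mathcal E$ sufficiently close to $\Ein(g)$, $b$ is small so $\griem$ is still a Riemannian metric. Renaming $\varphi:=\Phi\in H^{k+1,p}$ we get $\Ein(\griem)=\varphi^*\mathcal E$, which is the assertion. I expect the only real point requiring attention — rather than a genuine obstacle — to be bookkeeping of the Sobolev indices between Lemmas \ref{lemme23} and \ref{imric} (one loses/gains a derivative through $\delta^*$ and through $F$, and the composition of an $H^{k+2,p}$ metric with an $H^{k+1,p}$ diffeomorphism stays in the right space because $p>n$ makes these spaces algebras under the relevant operations); everything else is a direct concatenation of the cited statements.
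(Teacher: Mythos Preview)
Your proposal is correct and follows exactly the approach of the paper, which simply states that the lemma is a direct consequence of Lemma \ref{lemme23} applied with $E=\Ein(g)$ together with Lemma \ref{imric}; you have merely unpacked this concatenation (including the verification that $\Ein(g)$ is parallel and non degenerate, and the Sobolev bookkeeping) in more detail than the paper does.
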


We will complete the proof of proposition \ref{einsclose}, where now ${\mathcal E}\in H^{k+1,p}$,
but $\griem$ and $\varphi$ still cames from Lemma \ref{lemmeproxdif} so $\varphi$ is {\it a priori} not  regular enough.
If we  inspect the pages 364-365
in \cite{DeturckEinstein} we can see that we just have to change $\Ric(\griem)$ by $\Ein(\griem)$
to obtain that $\varphi$  is in fact in $H^{k+2,p}$. We conclude that $(\varphi^{-1})^*\griem\in H^{k+1,p}$
and has ${\mathcal E}$ as its image by $\Ein$. At this level we also use that $\Ein(\griem)$
is non degenerate (see equation (2.8) there).
\\

The Theorem \ref{maintheoremRicci} is now a direct consequence of the Lemma \ref{maintheoremPseudoRiem}, the Proposition \ref{einsclose} with $k=0$,
and the regularity result of \cite{Deturck-Kazdan}.

\begin{exemple}
Recalling that the Ricci curvature of a	 product of Riemannian manifolds is the direct sum of the
Ricci curvatures of each factors, we see that  a product of Einstein manifolds clearly satisfies the assumption of the Theorem \ref{maintheoremRicci}. 
The simplest example combining the 3 possibilities of Einstein constants is the following.
Let us consider three  compact Einstein manifolds $(\mathcal X,g_{-})$, $(\mathcal Y,g_{+})$, 
$(\mathcal Z,g_{0})$ with Ricci curvatures given by $\Ric(g_-)=-g_-$, $\Ric(g_+)=g_+$, $\Ric(g_0)=0$.
Then $M=\mathcal X\times\mathcal Y\times \mathcal Z$ endowed with 
$$
g=g_-\oplus g_+\oplus g_0,
$$
has  parallel Ricci curvature equal to 
$$
\Ric(g)=-g_-\oplus g_+\oplus 0.
$$
In this example, the kernel of $\Delta_L$ contains  the parallel tensors
$$
h=c_-g_-\oplus c_+g_+\oplus c_0g_0,
$$
for any constants $c_-,c_+,c_0$. Here, we only have to choose $\Lambda$ in order to destroy this kernel and make $\Ein(g)$ non degenerate.
\end{exemple}
%



\providecommand{\bysame}{\leavevmode\hbox to3em{\hrulefill}\thinspace}
\providecommand{\MR}{\relax\ifhmode\unskip\space\fi MR }
\providecommand{\MRhref}[2]{%
  \href{http://www.ams.org/mathscinet-getitem?mr=#1}{#2}
}
\providecommand{\href}[2]{#2}


\begin{thebibliography}{10}

\bibitem{Baldes1986}
Alfred Baldes, \emph{Nonexistence of {R}iemannian metrics with prescribed
  {R}icci tensor}, Nonlinear problems in geometry ({M}obile, {A}la., 1985),
  Contemp. Math., vol.~51, Amer. Math. Soc., Providence, RI, 1986, pp.~1--8.
  \MR{848927 (87k:53085)}

\bibitem{Delanoe1991}
Ph. Delano{\"e}, \emph{Obstruction to prescribed positive {R}icci curvature},
  Pacific J. Math. \textbf{148} (1991), no.~1, 11--15.

\bibitem{Delanoe2003}
\bysame, \emph{Local solvability of elliptic, and curvature, equations on
  compact manifolds}, J. Reine Angew. Math. \textbf{558} (2003), 23--45.
  \MR{1979181 (2004e:53054)}

\bibitem{Delay:ricciAE}
E.~Delay, \emph{Inversion d'op\'erateurs de courbure au voisinage de la
  m\'etrique euclidiennne}, bull. Soc. Math. France, \`a para\^{i}tre,
  hal-00973138.

\bibitem{Delay:ricciproduit}
\bysame, \emph{Sur l'inversion de l'op\'erateur de {R}icci au voisinage d'une
  m\'etrique {R}icci parall\`ele}, Annales de l'institut Fourier, \`a
  para\^{i}tre, hal-00974707v2.

\bibitem{Delay:etude}
\bysame, \emph{Etude locale d'op\'erateurs de courbure sur l'espace
  hyperbolique}, J. Math. Pures Appli. \textbf{78} (1999), 389--430.

\bibitem{Delay:study}
\bysame, \emph{Study of some curvature operators in the neighbourhood of an
  asymptotically hyperbolic {E}instein manifold}, Advances in Math.
  \textbf{168} (2002), 213--224.

\bibitem{DelayHerzlich}
E.~Delay and M.~Herzlich, \emph{Ricci curvature in the neighbourhood of
  rank-one symmetric spaces}, J. Geometric Analysis \textbf{11} (2001), no.~4,
  573--588.

\bibitem{Deturck:ricci}
D.~DeTurck, \emph{Existence of metrics with prescribed ricci curvature : Local
  theory}, Invent. Math. \textbf{65} (1981), 179--207.

\bibitem{Deturckrank1}
Dennis DeTurck and Hubert Goldschmidt, \emph{Metrics with prescribed {R}icci
  curvature of constant rank. {I}. {T}he integrable case}, Adv. Math.
  \textbf{145} (1999), no.~1, 1--97.

\bibitem{DeturckEinstein}
Dennis~M. DeTurck, \emph{Prescribing positive {R}icci curvature on compact
  manifolds}, Rend. Sem. Mat. Univ. Politec. Torino \textbf{43} (1985), no.~3,
  357--369 (1986).

\bibitem{Deturck-Kazdan}
Dennis~M. DeTurck and J.~Kazdan, \emph{Some regularity theorems in riemannian
  geometry}, Ann. Scient. Ec. Norm. Sup. \textbf{14} (1981), no.~4, 249--260.

\bibitem{Deturck-Koiso}
Dennis~M. DeTurck and Norihito Koiso, \emph{Uniqueness and nonexistence of
  metrics with prescribed {R}icci curvature}, Ann. Inst. H. Poincar\'e Anal.
  Non Lin\'eaire \textbf{1} (1984), no.~5, 351--359.

\bibitem{Hamilton1984}
Richard Hamilton, \emph{The {R}icci curvature equation}, Seminar on nonlinear
  partial differential equations ({B}erkeley, {C}alif., 1983), Math. Sci. Res.
  Inst. Publ., vol.~2, Springer, New York, 1984, pp.~47--72. \MR{765228
  (86b:53040)}

\bibitem{Lichnerowicz:prop}
A.~Lichnerowicz, \emph{Propagateurs et commutateurs en relativit\'e
  g\'en\'erale}, Pub. Math. de l'IHES \textbf{10} (1961), 5--56.

\bibitem{Pulemotov2013}
A.~Pulemotov, \emph{Metrics with prescribed {R}icci curvature near the boundary
  of a manifold}, Mathematische Annalen \textbf{357} (2013), 969--986.

\bibitem{PulemotovRubinstein}
A.~Pulemotov and Y.A. Rubinstein, \emph{Ricci iteration on homogeneous spaces},
  arXiv:1606.05064 [math.DG] (2016).

\bibitem{Wu:Holonomy}
H.~Wu, \emph{Holonomy groups of indefinite metrics}, Pacific J. Math.
  \textbf{20} (1967), 351--392.

\end{thebibliography}
\end{document}